\newtheorem{theorem}{Theorem}[section]
\newtheorem{proposition}[theorem]{Proposition}
\theoremstyle{definition} \newtheorem{definition}{Definition}[section]
\theoremstyle{remark}
\def \B+{\operatorname{B}}
\def \Id{\operatorname{Id}}
\def\ddto{\left.\frac{d}{dt}\right|_{t=0}}
\newcommand{\SO}{\operatorname{SO}}
\newcommand{\PSL}{\operatorname{PSL}}
\newcommand{\so}{{\mathfrak so}}
\renewcommand{\sl}{{\mathfrak sl}}
\newcommand{\RR}{\mathbb{R}}
\newcommand{\CC}{\mathbb{C}}
\newcommand{\g}{{\mathfrak g}}
\newcommand{\gl}{{\mathfrak gl}}
\newcommand{\h}{{\mathfrak h}}
\newcommand{\m}{{\mathfrak m}}
\newcommand{\Exp}{\operatorname{Exp}}
\newcommand{\diag}{\operatorname{diag}}
\newcommand{\y}{{\bf y}}
\newcommand{\z}{{\bf z}}
\newcommand\mik[1]{{\bf{#1}}}
\newcommand\miks[1]{\underline{#1}}
\newcommand\mikt[1]{{#1}_0}
\newcommand\mikp[1]{(\miks{#1};\mikt{#1})}
\def\Ad{\operatorname{Ad}}
\def\Aut{\operatorname{Aut}}
\def\dExp{\operatorname{dExp}}
\def\dExpinv{{\dExp}^{-1}}
\def \M{{\mathcal M}}
\def \A{{\mathcal A}}
\def\ad{{\operatorname{ad}}}
\newcommand\refp[1]{(\ref{#1})}
\newcommand\stset[2]{\left\{\left.#1\ \right|\ #2\right\}}
\newcommand{\XM}{{\mathcal X}\M}
\begin{document}


\thispagestyle{empty}

\title{Geometric Integration on Symmetric Spaces}

\author{Hans Munthe-Kaas}
\address{Lie St{\o}rmer Center, UiT The Arctic University of Norway \\and Department of Mathematics, University of Bergen.}
\email{hans.munthe-kaas@uit.no}

\date{\today}

\maketitle

\begin{abstract}
We consider geometric numerical integration algorithms for differential equations evolving on symmetric spaces. The integrators are constructed from canonical operations on the symmetric space, its Lie triple system (LTS), and the exponential from the LTS to the symmetric space. 
Examples of symmetric spaces are $n$-spheres and Grassmann manifolds, the space of positive definite symmetric matrices, Lie groups with a symmetric product, and elliptic and hyperbolic spaces with constant sectional curvatures. We illustrate the abstract algorithm with concrete examples. In particular for the $n$-sphere and the $n$-dimensional hyperbolic space the resulting algorithms are very simple and cost only ${\mathcal O}(n)$ operations per step. 
\end{abstract}


\section{Introduction} 
\emph{Symmetric spaces} are fundamental geometric objects in mathematics originating from the seminal work on non-Euclidean geometries by Gauss, Bolyai, Lobachevsky, Riemann, Klein and Lie in the 19'th century. 
Examples include:
\begin{itemize}
\item The $n$-sphere $S^n$, the surface of the unit ball in $\RR^{n+1}$. 
\item Hyperbolic spaces $H^n$ with constant negative sectional curvature.
\item Symmetric positive definite matrices with $A\cdot B = AB^{-1}A$. 
\item Lie groups $G$ with a symmetric product $g\cdot h = gh^{-1}g$ and subsets of $G$ closed under $\cdot$. 
\item Homogeneous spaces $\M = G/H$ where the Lie algebra of $G$ splits as $\g = \h\oplus \m$ such that $\h$ is the Lie algebra of $H$ and $\m$ is a Lie triple system. 
\item The real line $\RR$ with $x\cdot y = 2x - y$.
\item Grassman manifolds: the $p$-dimensional subspaces of $\RR^n$.
\end{itemize}
Symmetric spaces have also appeared in various numerical algorithms, such as e.g.\ matrix computations and splines~\cite{munthe2014symmetric,bogfjellmo2018numerical,camarinha1995splines}. 

In this paper we are concerned with numerical algorithms for integrating dynamical systems evolving on symmetric spaces. 
Over the last two decades extensive theories have been developed for numerical integration of Lie groups and homogeneous spaces, so-called \emph{Lie group integrators}~\cite{iserles2000lie}. 
A symmetric space can always be constructed as a homogeneous space, so Lie group integrators \emph{can} be used also for symmetric spaces. However,
this approach neglects the finer geometric structures of the symmetric space. In particular, the natural metric of a symmetric space is preserved by
a symmetric product, but generally not by a (left- or right) Lie group action.  Furthermore, the tangent space of the symmetric space is a triple system closed under double Lie brackets, but not under the single bracket of the Lie algebra. The exponential map from the triple system to the symmetric space is often much cheaper to compute than the corresponding exponential map from the Lie algebra to the Lie group.  

So, there are good reasons to consider \emph{intrinsic} integration algorithms on symmetric spaces. In this paper we propose integration schemes entirely built from operations which can be canonically obtained from the definition of a symmetric space  $(\M,\cdot)$ as a manifold  with a symmetric product. Algorithms are written out explicitly for several important examples. For the $n$-sphere and hyperbolic $n$-space, the computational cost of the algorithms are just ${\mathcal O}(n)$ per step, using Rodrigues type formulae. The concrete algorithms for these cases can easily be implemented without comprehending the theoretical framework of this paper. 

\section{Canonical integration on symmetric spaces}

\subsection{An outline of the algorithm} 
We discuss the basic idea of canonical symmetric space integration in a general geometric language, assuming some familiarity with symmetric spaces. Detailed mathematical definitions are given in Section~\ref{sec:definitions} and concrete examples in Section~\ref{sec:concrete}. 

The basic idea of the algorithm is very similar to the basic formulation of the RKMK algorithm on a Lie group $G$, as presented in~\cite{munthe1999high}. In Lie group case we seek the solution of a differential equation $y'(t) = f(y(t)) y(t)$ for $y(t)\in G$, where $f\colon G\rightarrow \g$ and $\g=T_eG$ is the Lie algebra. The equation is pulled back from $G$ to the linear space $\g$ by the ansatz $y(t) = \exp(\theta(t))$ for $\theta(t)\in \g$. A crucial point is that
 $\theta(t)$ is the solution of the 'dexpinv' equation $\theta'(t) = \operatorname{dexp}^{-1}_\theta f(\exp(\theta))$, where $\operatorname{dexp}$ denotes a trivialised tangent of the exponential mapping and $\operatorname{dexp}^{-1}_\theta$ can be computed by commutators on $\g$. The RKMK Lie group integrator is obtained by solving the dexpinv equation by a Runge--Kutta method, moving back and forth between $\g$ and $G$ using $\exp$. 

Unlike a Lie group, which has the identity $e$ as a special point, a symmetric space has no special point. We can choose any point $o\in \M$ as
a \emph{base point} for various constructions, as long as we ensure that the construction is geometrically independent of the chosen  point. It should, however, be remarked that from the perspective matrix representations and efficient numerical linear algebra, certain base points might be preferred. 
We call $(\M,\cdot,o)$ a \emph{pointed symmetric space}. The tangent space $\m:=T_o\M$ has the algebraic structure of a \emph{Lie Triple System},
$(\m,[\_,\_,\_])$, where $[\_,\_,\_]\colon \m\times\m\times\m\rightarrow \m$ is the \emph{triple bracket}, defined below. 

On $\M$ there exists a canonical connection $\nabla$ which is torsion free $T=0$ and has constant (parallel) curvature $\nabla R=0$. 
The algebra describing vector fields with the canonical connection on a symmetric space, called a Lie Admissible Triple algebra, is studied in~\cite{munthe2023lie}. 
The geodesic of this connection,
starting at a tangent $V\in \m$ is denoted $\Exp\colon \m\rightarrow \M$. The connection defines \emph{parallel transport} of tangent vectors along the geodesic. For a given $V\in \m$, parallel transport yields a linear isomorphism  $\Gamma_{V}\colon \m\rightarrow T_{\Exp(V)}\M$, which transports a tangent  vector along the geodesic of $V$ from $t=0$ to $t=1$. 

Crucial to our approach is the following characterisation of the tangent of the geodesic exponential $\Exp$ trivialised by  parallel transport with $\Gamma$:

\begin{proposition}\label{prop:dExp} For $V,W\in \m$ we have
\begin{equation}\label{eq:1}
\left.\frac{d}{dt}\right|_{t=0}\Exp(V+tW) = \Gamma_V \dExp_V W,
\end{equation}
where $\dExp_V\colon \m\rightarrow\m$ is given as
\begin{equation}
\dExp_V = \left.\frac{\sinh(\sqrt{x})}{\sqrt{x}}\right|_{x=\ad^2_V} =  \left.\sum_{n=0}^\infty\frac{x^n}{(2n+1)!}\right|_{x=\ad^2_V},
\end{equation}
for $\ad^2_V W := [W,V,V]$.
\end{proposition}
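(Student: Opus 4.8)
The plan is to reduce the identity to a statement about Jacobi fields along the geodesic $t\mapsto\Exp(tV)$ and then to exploit the fact that, on a symmetric space, the curvature endomorphism along a geodesic is parallel and equals $-\ad_V^2$ (up to sign conventions), which turns the Jacobi equation into a constant-coefficient linear ODE whose fundamental solution is exactly $\sinh(\sqrt{x})/\sqrt{x}$ evaluated at $x=\ad_V^2$. Concretely, I would first fix $V,W\in\m$ and consider the two-parameter family $\sigma(s,t)=\Exp(t(V+sW))$. For each fixed $s$ this is a geodesic, so the variation field $J(t):=\partial_s\sigma(s,t)|_{s=0}$ is a Jacobi field along the geodesic $\gamma(t)=\Exp(tV)$, with initial conditions $J(0)=0$ and $\frac{\nabla}{dt}J(0)=W$ (the latter from differentiating $\partial_t\sigma(s,0)=V+sW$). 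The left-hand side of \refp{eq:1} is then $J(1)$, so it suffices to identify $J(1)$ with $\Gamma_V\dExp_V W$.

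Next I would write $J$ in terms of a parallel frame. Let $\Gamma_t\colon\m\to T_{\gamma(t)}\M$ denote parallel transport along $\gamma$ from $0$ to $t$, and set $u(t):=\Gamma_t^{-1}J(t)\in\m$, so that $u(0)=0$ and $u'(0)=W$. The Jacobi equation $\frac{\nabla^2}{dt^2}J + R(J,\gamma')\gamma' = 0$ becomes, in this trivialisation, $u''(t) + \widehat{R}(t)u(t)=0$, where $\widehat{R}(t):=\Gamma_t^{-1}R(\,\cdot\,,\gamma'(t))\gamma'(t)\,\Gamma_t$. Here I use the defining properties of the canonical connection recalled in the excerpt: torsion-freeness and $\nabla R=0$ imply that $\widehat{R}(t)$ is independent of $t$, and at $t=0$ the curvature of a symmetric space is given by the triple bracket, $R(W,V)V = -[W,V,V] = -\ad_V^2 W$ (this is the standard identification of the LTS bracket with the Riemann tensor at the base point, which I would cite or derive from the $\nabla R=0$, $T=0$ structure). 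Hence $u''(t) = \ad_V^2\, u(t)$, a linear ODE with constant operator coefficient $\ad_V^2$ acting on the finite-dimensional space $\m$.

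The solution of $u''=\ad_V^2 u$ with $u(0)=0$, $u'(0)=W$ is $u(t)=\dfrac{\sinh(t\sqrt{x})}{\sqrt{x}}\Big|_{x=\ad_V^2}W$, interpreted via the everywhere-convergent power series $\sum_{n\ge0}\frac{t^{2n+1}x^n}{(2n+1)!}$; this is purely a matter of matching Taylor coefficients and needs no case distinction on whether $\ad_V^2$ is invertible or has negative/complex spectrum. Evaluating at $t=1$ gives $u(1)=\dExp_V W$, and therefore $J(1)=\Gamma_1 u(1)=\Gamma_V\dExp_V W$, which is exactly \refp{eq:1}.

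\textbf{Main obstacle.} The only genuinely delicate point is the curvature identification: justifying cleanly that along the geodesic of $V$ the trivialised curvature operator $W\mapsto\widehat{R}W$ is constant and equals $\ad_V^2$, with the correct sign. This follows from $T=0$ and $\nabla R=0$ together with the relation between the canonical connection's curvature at $o$ and the triple bracket of the associated LTS, but one must be careful that the sign and normalisation conventions for $[\_,\_,\_]$, for $R$, and for $\Gamma$ all match so that the $\sinh$ (rather than $\sin$) and the pre-factor $1/(2n+1)!$ come out correctly; everything else is the standard Jacobi-field computation specialised to the constant-coefficient case.
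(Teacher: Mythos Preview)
Your argument is correct and complete: the Jacobi-field derivation is the standard route, and your sign check $R(W,V)V=-\ad_V^2 W$ is consistent with the paper's conventions (cf.\ the sphere example, where $\ad_v^2 w=(vv^T-v^TvI)w$ and the sectional curvature is $+1$). This is essentially the Helgason proof the paper cites.

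The paper's own written-out proof in Section~\ref{sec:Lie}, however, proceeds quite differently. Rather than working intrinsically on $\M$, it computes in the Lie group model $L^+=(L,\cdot,e)$ with $x\cdot y=xy^{-1}x$, where $\Exp=\exp$ and $\Gamma_V W=\exp(\tfrac12 V)W\exp(\tfrac12 V)$. Starting from the classical Lie-group formula $\ddto\exp(V+tW)=\operatorname{dexp}_V W\,\exp(V)$ with $\operatorname{dexp}_V=(e^x-1)/x$ for $x=\ad_V$, it conjugates by $\exp(-\tfrac12 V)$ and uses $\Ad_{\exp(-V/2)}=e^{-x/2}$ to obtain $\dExp_V=e^{-x/2}(e^x-1)/x=\sinh(x/2)/(x/2)$; the substitution $y=\tfrac14 x^2=\ad_V^2$ (coming from the triple bracket $[V,W,Z]=\tfrac14[[V,W]_\g,Z]_\g$) then gives $\sinh(\sqrt{y})/\sqrt{y}$. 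So the paper trades the geometry of Jacobi fields for a purely algebraic identity among exponential series. Your approach is intrinsic and makes the role of $\nabla R=0$ transparent; the paper's approach avoids any appeal to curvature or connections but, as written, is literally a computation in $L^+$ and relies implicitly on the fact that every connected symmetric space embeds in such a group via the quadratic representation.
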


Proof: See~\cite{helgason2001differential}, Th. 4.1 p.215. A proof is also provided here, in Section~\ref{sec:Lie}. 

Consider the differential equation for $y(t)\in \M$ given a vector field $F\colon \M\rightarrow T\M$:
\begin{equation}\label{eq:diffeqn}
y'(t) = F(y(t)), \quad y(0) = o. 
\end{equation}
Assuming $y(t)\in U$ is within a normal neighbourhood  $o\in U\subset \M$, where $\Exp$ is well-defined, we write 
\begin{equation}y(t) = \Exp(\theta(t)), \quad \theta(t)\in \m, \end{equation} 
and
trivialise $F$ as
\begin{equation}
F(\Exp(\theta))  = \Gamma_{\theta} f(\Exp(\theta)), \quad \mbox{for $f\colon U\rightarrow \m$.}
\end{equation}

\begin{proposition}The curve $\theta(t)\in \m$ satisfies the differential equation
\begin{equation}
\theta'(t) = \dExpinv _\theta f(\Exp(\theta)), \quad \theta(0) = 0,
\end{equation}
for 
\begin{equation}
\dExpinv_{\theta} = \frac{\sqrt{x}}{\sinh(\sqrt{x})} =1-\sum_{n=1}^\infty (2^{2n}-2)\frac{B_{2n}}{(2n)!}x^{n}= 1 - \frac{x}{6}+ \frac{7x^2}{360}-\frac{31x^3}{15120} +{\mathcal O}(x^4),
\end{equation}
where $x = \ad^2_\theta = [\_,\theta,\theta]$ and $B_{2n}$ are the Bernoulli numbers. 
\end{proposition}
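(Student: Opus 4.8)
The plan is to differentiate the ansatz $y(t)=\Exp(\theta(t))$ and compare the result with the trivialised vector field. First I would restate Proposition~\ref{prop:dExp} as a statement about the full tangent map of $\Exp$: under the canonical identification $T_V\m\cong\m$, equation~\eqref{eq:1} says precisely that $T_V\Exp=\Gamma_V\circ\dExp_V$ as linear maps $\m\to T_{\Exp(V)}\M$, since $(T_V\Exp)(W)=\left.\frac{d}{dt}\right|_{t=0}\Exp(V+tW)$ by definition of the tangent map. Applying this along the differentiable curve $\theta$ and using the chain rule gives $\frac{d}{dt}\Exp(\theta(t))=(T_{\theta(t)}\Exp)(\theta'(t))=\Gamma_{\theta(t)}\dExp_{\theta(t)}\theta'(t)$.

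Next I would insert the differential equation~\eqref{eq:diffeqn}. Since $y'(t)=F(y(t))$ and $F(\Exp(\theta))=\Gamma_\theta f(\Exp(\theta))$, equating the two expressions for $y'(t)$ yields $\Gamma_{\theta(t)}\dExp_{\theta(t)}\theta'(t)=\Gamma_{\theta(t)}f(\Exp(\theta(t)))$. Parallel transport $\Gamma_{\theta(t)}$ is a linear isomorphism, so it cancels and we are left with $\dExp_{\theta(t)}\theta'(t)=f(\Exp(\theta(t)))$; moreover $\Exp(0)=o=y(0)$ forces $\theta(0)=0$. It remains to invert $\dExp_\theta$. As a function of $x$, $\sinh(\sqrt x)/\sqrt x=\sum_{n\ge0}x^n/(2n+1)!$ is entire with value $1$ at $x=0$ and zeros only at $x=-k^2\pi^2$ ($k\neq0$); hence, shrinking the normal neighbourhood $U$ so that $\ad_\theta^2$ has spectral radius below $\pi^2$, the operator $\dExp_\theta$ is invertible and $\dExpinv_\theta=\left.\sqrt x/\sinh(\sqrt x)\right|_{x=\ad_\theta^2}$, a convergent power series in $\ad_\theta^2$. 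This gives $\theta'(t)=\dExpinv_\theta f(\Exp(\theta))$.

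Finally I would identify the coefficients. Writing $z=\sqrt x$, it suffices to expand $z/\sinh z$ at $z=0$. Combining the generating function $z\coth z=\sum_{n\ge0}2^{2n}B_{2n}z^{2n}/(2n)!$ with the elementary identity $1/\sinh z=\coth(z/2)-\coth z$ gives $z/\sinh z=2\cdot\tfrac{z}{2}\coth(\tfrac{z}{2})-z\coth z=\sum_{n\ge0}(2-2^{2n})\tfrac{B_{2n}}{(2n)!}z^{2n}$; substituting back $z^{2n}=x^n$ and using $B_0=1$ yields $\dExpinv_\theta=1-\sum_{n\ge1}(2^{2n}-2)\tfrac{B_{2n}}{(2n)!}x^n$, whose first terms are $1-x/6+7x^2/360-31x^3/15120+\mathcal O(x^4)$ as claimed.

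The computation is short once Proposition~\ref{prop:dExp} is in hand; the points demanding care are the passage from the one-parameter formula~\eqref{eq:1} to the derivative along a general curve (a chain-rule argument, but one must use the identification $T_V\m\cong\m$ consistently) and the bookkeeping of where the restriction $y(t)\in U$ enters --- namely, in guaranteeing simultaneously that $\Exp$ is a diffeomorphism and that the operator power series defining $\dExpinv_\theta$ converges. I expect pinning down this normal neighbourhood to be the main, if minor, obstacle.
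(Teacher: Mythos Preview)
Your argument is correct and is exactly the route the paper implicitly takes: the paper states this proposition as an immediate consequence of Proposition~\ref{prop:dExp} without a separate proof, relying on the chain rule and the invertibility of $\dExp_\theta$ (the inverse formula $\sqrt{x}/\sinh(\sqrt{x})$ is recorded at the end of the $\dExp$ computation in Section~\ref{sec:Lie}). Your derivation of the Bernoulli coefficients via $1/\sinh z=\coth(z/2)-\coth z$ is a nice addition that the paper omits entirely.
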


We obtain the first version of our algorithm by applying a Runge--Kutta (RK) method to $\theta(t)$:

\ \\
\begin{algorithm}[H]
\SetAlgoLined
\KwResult{Evolve $y'(t)=F(y(t))$ from $y_0 = o$ to $y_1 \approx y(t=h)$.}
Choose time step $h$ and $\{a_{i,j}\}_{i,j=1}^r$, $\{b_j\}_{j=1}^r$ coefficients of an RK method.\\
\For{$i=1,\ldots,r$}{
    $\theta_i =\sum_{j=1}^r a_{i,j} K_j$ \\
   $K_i = h\ \dExpinv_{\theta_i} \Gamma_{\theta_i}^{-1} F(\Exp(\theta_i))$ \\
  }
  $y_1 = \Exp(\sum_{j=1}^r b_{j} K_j)$\\
 \caption{Canonical integration step on symmetric space $(\M,\cdot,o)$, with $y_0=o$.}
 \label{alg:stepfromo}
\end{algorithm}\ \\

\emph{Remark.} The perhaps most important case is \emph{explicit RK methods}, where the coefficient matrix $\{a_{i,j}\}$ is strictly lower triangular. In this case $\theta_1=0$ and we find the others as
$\theta_i = \sum_{j=1}^{i-1}a_{i,j}K_j$. If the method is \emph{implicit}, a system of equations must be solved to find $K_i$ and $\theta_i$. 

\emph{Remark.} The $\dExpinv$ series for the symmetric space exponential starts with the double bracket, while for the Lie group exponential the series starts with a single bracket. In~\cite{munthe1995lie} we showed that a na\"{i}ve Lie group method, with no correction for $\dExpinv$ can generally only achieve order 2. The symmetric space algorithm implemented without a $\dExpinv$ correction can achieve order 3.
With higher order truncations of the $\dExpinv$ expansion, this method achieves the same order as the underlying RK method. 

This basic version of the method chooses the base point $o= y_0$. If we 
want to choose a different base point $o\neq y_0$, we can pull back the equation using an automorphism of $\M$ sending $o$ to $y_0$. 
The automorphisms $\Aut(\M)$ are all mappings $\tau\colon\M\rightarrow \M$ such that $\tau(x\cdot y) = \tau(x) \cdot \tau(y)$. 
Let $T\tau\colon T\M\rightarrow T\M$ denote the tangent map. 

\ \\
\begin{algorithm}[H]
\SetAlgoLined
\KwResult{Evolve $y'(t)=f(y(t))$ from $y_0$ to $y_1 \approx y(t=h)$.}
Choose any $\tau_{y_0}\in \Aut(\M)$ such that $\tau_{y_0}(o) = y_0$. \\
Compute $\tilde{y}_1$ by applying  Algorithm~\ref{alg:stepfromo} on the pull-back equation
\[\tilde{y}'(t) = T\tau_{y_o}^{-1} F(\tau(\tilde{y}(t))), \quad \tilde{y}(0) = o.\]
\flushleft Map back as $y_1= \tau_{y_0}(\tilde{y}_1)$.\\
 \caption{Canonical integration step on symmetric space $(\M,\cdot,o)$ with $y_0\ne o$.}
 \label{alg:symstep}
\end{algorithm}\ \\
We return to a more detailed description of these algorithms in Section~\ref{sec:definitions}, after a basic introduction to symmetric spaces.

\subsection{Defining the canonical operations} \label{sec:definitions}
We start with a pointed symmetric space $(\M,\cdot,o)$ and define all the operations of the algorithm from this structure. 
Concrete examples are found in Section~\ref{sec:concrete}. 
For a comprehensive treatment of the background theory, we refer to~\cite{loos1969symmetric,helgason2001differential}. 

\begin{definition}[Symmetric space]\label{def:symspace}
A symmetric space is a manifold $(\M,\cdot)$ with a product ${\cdot \colon\M\times \M\rightarrow \M}$ such that
\begin{align*}
x\cdot x & = x\\
x\cdot (x\cdot y)  &= y\\
x\cdot (y\cdot z) &= (x\cdot y)\cdot (x\cdot z)
\end{align*}
and every $x$ has a neighbourhood $U$ such that $x\cdot y=y$ implies $y=x$ for all $y\in U$.
\end{definition}

As an example, any Lie group $L$ with the symmetric product $x\cdot y:= xy^{-1}x$ is a symmetric space denoted $(L^+ ,\cdot)$. An other example
is the sphere $S^n = \{x\in \RR^{n+1}\colon x^Tx = 1\}$ with the product $x\cdot y = y - 2xx^T y$. 

For each point $x\in \M$ we have the symmetry $\sigma_x\colon \M\rightarrow \M$ defined as $\sigma_x(y) := x\cdot y$.
The symmetry $\sigma_x\in \Aut(\M)$ is an involutive automorphism of
the symmetric space $(\M,\cdot)$, having $x$ as an isolated fix point\footnote{To get a feeling for this structure, consider a Riemannian metric space with its geodesics. Moving from $y$ to $x$ along the joining geodesic twice the distance from $y$ to $x$, we arrive at $\sigma_x(y)$. So $\sigma_x$ is a reflection of the
geodesics through the point $x$. A Riemannian symmetric space has for each $x\in\M$ such a symmetry $\sigma_x$ which is a metric isometry and has $x$ as an isolated fix-point.}. 
Note: 'involutive': $\sigma_x(\sigma_x(y))=y$,  'automorphism': $\sigma_x(y\cdot z)=\sigma_x(y)\cdot \sigma_x(z)$,   isolated 'fix-point $x$':  $\sigma_x(x) = x$, 
are exactly the contents of Definition~\ref{def:symspace}

The product of two reflections is called a \emph{displacement}. 
The \emph{group of displacements} is generated by all $\sigma_x\sigma_y$ for $x,y\in \M$ and
denoted $G = G(\M)$. This is a normal subgroup of $\Aut(\M)$. 

Let $o\in \M$ be a chosen point called the \emph{base point}, and let $\m := T_o \M$. We call $(\M,\cdot,o)$ a \emph{pointed symmetric space}.
The map $Q\colon \M\rightarrow G(\M)$ defined as $x\mapsto Q_x := \sigma_x\sigma_o$, called the \emph{quadratic representation}. $Q_\M$ 
generates all of $G$ and it is a homomorphism of $(\M,\cdot,o)$ onto $(G^+,\cdot,e)$ with the
symmetric product $g\cdot h:= gh^{-1}g$, satisfying
\[Q_{x\cdot y} = Q_xQ_y^{-1}Q_x = Q_x\cdot Q_y, \quad Q_o = e\in G. \]
The quadratic representation defines an action of $\M$ on itself, $(x,y) \mapsto Q_xy\colon \M\times \M\rightarrow \M$. 
By differentiating $Q$ at $o$ we find the infinitesimal generator of the action as the vector field $\xi_V\in \XM$, defined for $V\in\m$ at $y\in\M$ as
\begin{equation}\label{eq:Exp1}
\xi_V(y)=    \left.\frac{d}{dt}\right|_{t=0} Q_{\gamma(t)}(y), \quad \gamma(0) = o, \quad \gamma'(0)=\frac12 V. 
\end{equation}
The factor $\frac12$ is due to the quadratic nature of $Q$. With this scaling we have $\xi_V(o)= V$, so it defines an extension of a tangent at $o$ to a vector field on all of $\M$. Also recall that $Q_y o$ sends $o$ beyond $y$ to the point on the opposite side of $y$ on the geodesic, so we have to travel with half the speed to arrive at $y$ for $t=1$. 

\begin{definition}[Geodesic $\Exp$] The geodesic exponential at the base point is a mapping $\Exp\colon \m\rightarrow \M$ defined as the $t=1$ solution of the differential equation
\begin{equation}
y'(t) = \xi_V(y(t)) , \quad y(0) = o. \label{eq:Exp2}
\end{equation}
\end{definition}

Let $(\RR,\cdot)$ be the real line with symmetric product $s\cdot t = 2s-t$. It can be shown that ${\Exp(s\cdot t V) = \Exp(sV)\cdot\Exp(tV)}$. 
We summarise the most important properties of the symmetric space exponential:

\begin{proposition} $\Exp\colon (\RR,\cdot,0)\rightarrow (\M,\cdot,o)$ is the unique pointed symmetric space homomorphism with derivative $V\in \m$ at $0$.
It relates to the quadratic representation via
\[\Exp(V) = Q_{\Exp(\frac12 V)}o.\]
\end{proposition}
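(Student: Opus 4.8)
The plan is to establish three things: that $\Exp$ is a pointed symmetric space homomorphism $(\RR,\cdot,0)\to(\M,\cdot,o)$, that it is the unique one with prescribed derivative $V$ at $0$, and that it satisfies the stated relation with the quadratic representation. I would take the last identity first, since it is the concrete computational anchor and the other claims follow from it together with the structure already set up in the excerpt. Recall that $\Exp(V)$ is defined as the time-$1$ value of the flow of the infinitesimal generator $\xi_V$, and that $\xi_V$ is by construction \eqref{eq:Exp1} the generator of the one-parameter family $t\mapsto Q_{\gamma(t)}$ with $\gamma(0)=o$, $\gamma'(0)=\tfrac12 V$. The factor $\tfrac12$ was inserted precisely so that $\xi_V(o)=V$; the first step is to make this rescaling rigorous by showing that the curve $y(t):=Q_{\Exp(\frac{t}{2}V)}o$ solves \eqref{eq:Exp2}. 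Differentiating $Q_{\Exp(\frac{t}{2}V)}o$ in $t$ and using that $Q$ is a homomorphism onto $(G^+,\cdot,e)$ (so that $Q_{\Exp(\frac{t+s}{2}V)}=Q_{\Exp(\frac{t}{2}V)}\cdot Q_{\Exp(\frac{s}{2}V)}$ once we know $s\mapsto\Exp(sV)$ is a homomorphism from $(\RR,\cdot)$) reduces the tangent at time $t$ to the value of the generator $\xi_V$ at the point $y(t)$, giving $y'(t)=\xi_V(y(t))$ with $y(0)=Q_o o=o$; uniqueness of ODE solutions then yields $\Exp(V)=y(1)=Q_{\Exp(\frac12 V)}o$.

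For the homomorphism property, I would verify $\Exp(s\cdot t\,V)=\Exp(sV)\cdot\Exp(tV)$, i.e.\ $\Exp((2s-t)V)=\Exp(sV)\cdot\Exp(tV)$. The cleanest route is to fix $t$ and consider both sides as curves in $s$: the left side is $s\mapsto\Exp((2s-t)V)$, an affinely reparametrised geodesic, hence again an integral curve of (a rescaling of) $\xi_V$; the right side is $s\mapsto\sigma_{\Exp(sV)}(\Exp(tV))=Q_{\Exp(sV)}\sigma_o(\Exp(tV))$, and one checks it satisfies the same first-order ODE with the same value at $s=t/2$ (where both sides equal $\Exp(tV)$, using $\sigma_z(z)=z$ and $2\cdot\frac{t}{2}-t=0$... more precisely one picks the matching initial point). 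Equivalently, and perhaps more transparently, one transports the corresponding identity on $(G^+,\cdot)$ — where $\exp$ from the Lie algebra is a homomorphism and $g\cdot h=gh^{-1}g$ makes the one-parameter-subgroup statement elementary — down to $\M$ via $Q$ and the relation just proved. The derivative of $\Exp$ at $0$ is $V$ by construction of $\xi_V$ with the $\tfrac12$ normalisation, and since a $C^1$ homomorphism $(\RR,\cdot,0)\to(\M,\cdot,o)$ is determined by its value and derivative at $0$ — its Taylor coefficients are forced by the functional equation $\Exp((2s-t)V)=\Exp(sV)\cdot\Exp(tV)$, or alternatively it must be an integral curve of the $\xi$-field determined by its initial velocity — uniqueness follows.

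The main obstacle I anticipate is bookkeeping the factor of $\tfrac12$ and the quadratic scaling consistently: the generator $\xi_V$ is defined via a curve $\gamma$ with $\gamma'(0)=\tfrac12V$, yet $\xi_V(o)=V$, and the relation $\Exp(V)=Q_{\Exp(\frac12 V)}o$ mixes these, so one must be careful that the chain rule through $Q$ reproduces exactly the ODE \eqref{eq:Exp2} and not a rescaled version of it. A secondary technical point is staying inside a normal neighbourhood $U$ where $\Exp$ and the flows are defined, so that all the ODE-uniqueness arguments are legitimate; this is a local statement and should cause no real difficulty, but it should be stated. Everything else is a routine consequence of the functional equation of $Q$ and standard uniqueness for ODEs.
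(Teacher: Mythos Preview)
The paper states this proposition without proof; there is no argument in the text to compare against, so your plan must stand on its own.

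Your ingredients are the right ones, but the logical order is circular. You propose to establish the $Q$-relation first by differentiating $t\mapsto Q_{\Exp(tV/2)}o$, and you explicitly condition that computation on ``once we know $s\mapsto\Exp(sV)$ is a homomorphism from $(\RR,\cdot)$''. But the homomorphism property is precisely what your second step is meant to prove, and one of the two routes you sketch for it (``transport \ldots\ down to $\M$ via $Q$ and the relation just proved'') uses the $Q$-relation. As written, the argument eats its own tail. There is also a type mismatch in the displayed identity you invoke: $Q_{\Exp(\frac{t+s}{2}V)}=Q_{\Exp(\frac{t}{2}V)}\cdot Q_{\Exp(\frac{s}{2}V)}$ with the \emph{symmetric} product on $G^+$ unpacks to $Q_{\Exp(tV/2)}Q_{\Exp(sV/2)}^{-1}Q_{\Exp(tV/2)}$, which is not the one-parameter \emph{group} law you need to identify $t\mapsto Q_{\Exp(tV/2)}$ with the flow of $\xi_V$.

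The clean fix is to reorder and go through the displacement group $G=G(\M)$ directly. Since $\xi_V$ is by construction the action field of a one-parameter subgroup $t\mapsto g_t\in G$, the flow of $\xi_V$ is $\phi_t=g_t$ acting on $\M$, so $\Exp(tV)=g_t(o)$. Each $g_t$ is an automorphism of $(\M,\cdot)$; combined with $(\sigma_o)_*\xi_V=\xi_{-V}$ (hence $\sigma_o(\Exp(tV))=\Exp(-tV)$), applying $g_s$ to $o\cdot\Exp(tV)=\Exp(-tV)$ gives $\Exp(sV)\cdot\Exp((s+t)V)=\Exp((s-t)V)$, which is the homomorphism identity. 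The $Q$-relation then follows by checking that $t\mapsto Q_{\Exp(tV/2)}$ and $t\mapsto g_t$ are one-parameter subgroups of $G$ with the same generator at $t=0$. This is essentially your ``transport from $G^+$'' alternative; make it the primary line rather than a parenthetical.
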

Note that $Q_{\Exp(\frac12 tV)}o$ is a geodesic through $o$, while  $Q_{\Exp(\frac12 tV)}p$ is generally \emph{not} a geodesic through $p$ for $p\neq o$. 
The maps $Q_{\Exp(\frac12 tV)}\colon \M\rightarrow \M$ are $t$-parametrised families of automorphisms (isometries for Riemannian symmetric spaces) of $\M$. In the case of the sphere $S^2$, the family
$Q_{\Exp(\frac12 tV)}\colon S^2\rightarrow S^2$ rotates the sphere around an axis $V$ perpendicular to the north pole. Starting at the north pole, this yields a great circle (geodesic). Starting
at other points it yields (possibly) smaller circles orthogonal to the rotation axis $V$. 

For a fixed $y\in \M$, the tangent of the automorphism $p\mapsto Q_yp$ is denoted $TQ_y\colon T_p\M\rightarrow T_{Q_yp}\M$
\begin{equation}TQ_y(W) = \ddto TQ_y(\gamma(t)),\quad \gamma(0) = p, \gamma'(0)= W.\end{equation}
Restricted to $W\in \m$, this defines parallel transport along the geodesic:
\begin{definition}[Parallel transport of $W$ along the geodesic of $V$]\label{def:parallel} For $V,W\in \m$ define $\Gamma_V\colon \m\rightarrow T_{\Exp(V)} \M$ as
\[\Gamma_V W = TQ_{\Exp(\frac12 V)}(W). \]
\end{definition}

In the introduction we mentioned that symmetric spaces are equipped with  a canonical connection $\nabla\colon \XM\times\XM\rightarrow \XM$ for which  the name \emph{'geodesic curves'} acquires geometric meaning. For the Levi--Civita connection on a Riemannian symmetric space, the geodesics define (locally) shortest paths between points.
The canonical connection is torsion free and has constant curvature. We do not need $\nabla$ in this paper, so we will be brief on its construction. For  two vector fields $F,G\in \XM$  define the connection at the base point $o$ as the time derivative of the parallel transport,
\[\nabla_F G(o) = \left.\frac{d}{dt}\right|_{t=0} \Gamma_{tF(o)}^{-1}G(\Exp(tF(o))).\]
The definition can be extended to an arbitrary point $y\in \M$ by moving around with $\tau\in \Aut(\M)$ such that $\tau(o) = y$. We refer to~\cite{loos1969symmetric,kobayashi1963foundations} for details.

We define the last operation we need to integrate on $\M$,  the \emph{Lie triple system} on $\m$.
\begin{definition}[Triple bracket on $\m$]  The tri-linear bracket $[\_,\_,\_]\colon \m\times\m\times\m\rightarrow \m$  is defined as
\begin{equation}
[V,W,Z] = [[\xi_V,\xi_W]_J,\xi_z]_J (o),
\end{equation}
where $[\_,\_]_J$ is the Jacobi bracket of vector fields. 
\end{definition}

The algebra $(\m,[\_,\_,\_])$ has the following structure:

\begin{definition}[Lie triple system] A \emph{Lie triple system} (Lts) $(\A,[\cdot,\cdot,\cdot])$ is a vector space $\A$ with a tri-linear bracket $[\cdot,\cdot,\cdot]\colon \A\times\A\times\A\rightarrow \A$
satisfying for all $x,y,z,t,w\in \A$:
\begin{align}
&[x,x,z] = 0\label{eq:lts1}\\
&[x,y,z]+[y,z,x]+[z,x,y] = 0\label{eq:lts2}\\
&[x,y,[z,t,w]] = [[x,y,z],t,w] + [z,[x,y,t],w] + [z,t,[x,y,w]].\label{eq:lts3}
\end{align}
\end{definition}
We can combine Algorithms 1 and 2 in a single algorithm by using the quadratic representation to pull back $y_0$ to $\m$ and $\Exp$ to pull back from a neigbourhood of $o\in \M$ to $\m$. This yields:

\begin{proposition} Given $y'(t) = F(y(t))$, $y(0)=y_0$. Let $s\in \M$ be such that $Q_so = y_0$ and let
$\theta(t)\in\m$ such that $y(t) = Q_s(\Exp(\theta(t)))$. Then $\theta(t)$ satisfies
\[\theta'(t) = \dExpinv_\theta \Gamma_\theta^{-1}TQ_s^{-1} F(y(t)), \theta(0)=0.\]
\end{proposition}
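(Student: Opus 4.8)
The plan is to differentiate the defining relation $y(t) = Q_s(\Exp(\theta(t)))$ and solve for $\theta'(t)$ by inverting a chain of linear maps. Since $Q_s\in\Aut(\M)$ is a fixed diffeomorphism that does not depend on $t$, the chain rule gives $y'(t) = TQ_s\bigl(\tfrac{d}{dt}\Exp(\theta(t))\bigr)$, where $TQ_s$ is the (invertible) tangent map of $Q_s$ at the point $\Exp(\theta(t))$.

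Next I would compute $\tfrac{d}{dt}\Exp(\theta(t))$ using Proposition~\ref{prop:dExp}. That proposition records the directional derivative of $\Exp$ at $V\in\m$ along the straight line $t\mapsto V+tW$; since the derivative of a smooth map along a curve depends only on the curve's velocity vector, applying it to the curve $\theta(t)$ yields $\tfrac{d}{dt}\Exp(\theta(t)) = \Gamma_{\theta(t)}\,\dExp_{\theta(t)}\,\theta'(t)$ with $\dExp_\theta = \left.\tfrac{\sinh(\sqrt{x})}{\sqrt{x}}\right|_{x=\ad^2_\theta}$. Combining with the previous display and imposing $y'(t)=F(y(t))$ gives
\[F(y(t)) = TQ_s\,\Gamma_\theta\,\dExp_\theta\,\theta'(t).\]

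It then remains to invert the three linear operators on the right, in order. The map $TQ_s$ is invertible because $Q_s$ is a diffeomorphism; $\Gamma_\theta = TQ_{\Exp(\frac12\theta)}$ is a linear isomorphism by Definition~\ref{def:parallel} (it is parallel transport); and $\dExp_\theta$ is invertible with inverse $\dExpinv_\theta = \left.\tfrac{\sqrt{x}}{\sinh(\sqrt{x})}\right|_{x=\ad^2_\theta}$ provided $\theta(t)$ stays in the region where $\Exp$ is a local diffeomorphism — which is guaranteed by the standing assumption that $y(t)$ lies in a normal neighbourhood $U\ni o$. This produces exactly $\theta'(t) = \dExpinv_\theta\,\Gamma_\theta^{-1}\,TQ_s^{-1}\,F(y(t))$. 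For the initial condition, $y(0)=y_0=Q_s o = Q_s(\Exp(0))$, so applying $Q_s^{-1}$ gives $\Exp(\theta(0))=o=\Exp(0)$, and injectivity of $\Exp$ near $0$ forces $\theta(0)=0$.

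The argument is essentially routine bookkeeping; the only steps requiring a word of care are the passage from the straight-line formulation of Proposition~\ref{prop:dExp} to its chain-rule form along $\theta(t)$, and the invertibility of $\dExp_\theta$, which rests on remaining within the normal neighbourhood. I do not anticipate any real obstacle.
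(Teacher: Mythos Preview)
Your proof is correct and is precisely the argument the paper has in mind. The paper does not write out a proof for this proposition; it is presented as an immediate consequence of Proposition~\ref{prop:dExp} together with the chain rule and the pull-back by the automorphism $Q_s$, and your write-up supplies exactly those details.
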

Using a RK method on this equation yields the following algorithm:
\ \\
\begin{algorithm}[H]
\SetAlgoLined
\KwResult{Evolve $y'(t)=F(y(t))$ from $y(0)=y_0$ to $y_n \approx y(t\!=\!T)$.}
Choose time step $h=T/n$ and $\{a_{i,j}\}_{i,j=1}^r$, $\{b_j\}_{j=1}^r$ coefficients of an RK method.\\
\For{$\ell=0,\ldots,n-1$}{
  solve $Q_{s_\ell} o = y_\ell$ for $s_\ell$\\
\For{$i=1,\ldots,r$}{
  $\theta_i =\sum_{j=1}^r a_{i,j} K_j$ \\
  $K_i = h\ \dExpinv_{\theta_i} \Gamma_{\theta_i}^{-1} TQ_{s_\ell}^{-1}F\big(Q_{s_\ell}(\Exp(\theta_i))\big)$ \\
  }
  $y_{\ell+1} = Q_{s_\ell}\big(\Exp(\sum_{j=1}^r b_{j} K_j)\big)$\\
  }
 \caption{CSSI (Canonical Symmetric Space Integrator) $(\M,\cdot,o)$.}
 \label{alg:symmetric_integration}
\end{algorithm}\ \\

\section{Examples}\label{sec:concrete}
\subsection{Lie groups}\label{sec:Lie} Let $L$ be a Lie group with Lie algebra $(\g,[\_,\_]_{\g})$. For $V\in\g$ and $g\in G$ we write products in matrix style, e.g.\ $Vg$ instead of the more elaborate
$TR_g(V)$. 

Let $\exp\colon\g\rightarrow L$ denote the Lie group exponential. 
Any Lie group $L$ defines a pointed symmetric space denoted $L^+ := (L,\cdot,o)$, where $o=e$ is the identity of the Lie group and
$x\cdot y:= xy^{-1}x$. We obtain the quadratic representation $Q_x = \sigma_x\sigma_e$ and \[Q_xy = xyx.\] 
For any $V\in \g$ and $g\in L$,  we have $\xi_V(g) = \ddto Q_{\exp(\frac12 tV)}g = \frac12\big(Vg + gV\big)$. 
The solution of $y'(t) = \xi_V(y(t))$, $y(0) = g$ is $y(t) = \exp(\frac12 tV)g\exp(\frac12 tV)$, in particular for $g=e$ we have $y(t)=\exp(tV)$, so
we find  \[\Exp(V) = \exp(V).\] 
Let $V^r(g)=Vg$ and $V^\ell(g) = gV$. We have $[V^r,W^\ell]_J = 0$, $[V^r,W^r]_J = -[V,W]_\g^r$ and $[V^\ell,W^\ell]_J = [V,W]_\g^\ell$. Since $\xi_V = \frac12(V^r+V^\ell)$ we find the triple bracket
\[[V,W,Z] = [[\xi_V,\xi_W]_J,\xi_Z]_J(o) = \frac14 [[V,W]_\g,Z]_\g .\]
The parallel transport is \[\Gamma_V W = \ddto Q_{\Exp(\frac12 V)}\Exp(tW) = \exp(\frac12 V) W \exp(\frac12 V) .\]
We compute $\dExp$ and $\dExp^{-1}$ explicitly. From~\refp{eq:1} we have for $V,W\in \g$
\[\dExp_V W = \Gamma_V^{-1}\ddto \Exp(V+tW) = \exp(-\frac12 V)\ddto \exp(V+tW)\exp(-\frac12 V).\]
In the following computation, let the linear operators $x,y:\g\rightarrow \g$ be given as $xW=[V,W]_\g$ and $yW = [W,V,V] = \frac14[V,[V,W]_\g]_\g = \frac14x^2 W$. It is well known~\cite{iserles2000lie} that $\ddto \exp(V+tW) = \operatorname{dexp}_V W\exp(V)$ where $\operatorname{dexp}_V=(e^x-1)/x$. 
This yields, using $\Ad_{\exp(V)}= \exp(\ad_V)$: 
\begin{align*}
\dExp_V W &= \exp(-\frac12 V)\frac{e^x-1}{x}W\exp(V)\exp(-\frac12 V) = \Ad_{\exp(-\frac12 V)}\frac{e^x-1}{x}W =e^{-\frac{x}{2}}\frac{e^x-1}{x}W\\
&= \frac{\sinh(\frac{x}{2})}{x/2}W = \frac{\sinh(\sqrt{y})}{\sqrt{y}}W. 
\end{align*}
This establishes Proposition~\ref{prop:dExp}. It follows immediately that $\dExpinv_V = \frac{\sqrt{y}}{\sinh(\sqrt{y})}$.

This results in the following matrix version of Algorithm~\ref{alg:symmetric_integration}. As a specific application consider $y'(t) = F(y(t))$, where
$y(t)$ is a symmetric positive definite (SPD) matrix and $F(y)$ is symmetric. 
\ \\
\begin{algorithm}[H]
\SetAlgoLined
\KwResult{Evolve $y'(t)=F(y(t))$ from $y(0)=y_0$ to $y_n \approx y(t\!=\!T)$.}
Choose time step $h=T/n$ and $\{a_{i,j}\}_{i,j=1}^r$, $\{b_j\}_{j=1}^r$ coefficients of an RK method.\\
\For{$\ell=0,\ldots,n-1$}{
  solve $Q_{s_\ell} o = s_\ell s_\ell = y_\ell$ for $s_\ell$ (matrix square root)\\
\For{$i=1,\ldots,r$}{
  $\theta_i =\sum_{j=1}^r a_{i,j} \tilde{K}_j$ \\
  $K_i = h\  \Gamma_{\theta_i}^{-1} TQ_{s_\ell}^{-1}F\big(Q_{s_\ell}(\Exp(\theta_i))\big) = h\ \exp(-\frac{\theta_i}{2}) s_\ell^{-1} F\big(s_\ell\exp(\theta_i)s_\ell\big)s_\ell^{-1}\exp(-\frac{\theta_i}{2})$ \\
  $\tilde{K}_i = \dExpinv_{\theta_i} K_i = K_i - \frac{x}{6}K_i+ \frac{7x^2}{360}K_i-\frac{31x^3}{15120}K_i +\cdots$\\
 }
 $\theta_\ell = \sum_{j=1}^r b_{j} \tilde{K}_j$\\
  $y_{\ell+1} = Q_{s_\ell}\big(\Exp(\theta_\ell)\big) = s_\ell\exp(\theta_\ell) s_\ell$\\
  }
 \caption{CSGI (Canonical Symmetric Group Integrator)\\ $(\M,\cdot,o)$, $o=I$, $A\cdot B = AB^{-1}A$.}
 \label{alg:matrix_case}
\end{algorithm}\ \\
Here $xK_i = \frac14[[K_i,\theta_i],\theta_i]$ (matrix triple commutator), and the $\dExpinv$ series is truncated to the order of the RK method. 
The matrix square root is in general not uniquely defined. We return to the choice of square root in the end of Section~\ref{sec:lieaut}.

%

\subsubsection{Automorphisms on Lie groups and symmetric decompositions}\label{sec:lieaut} For general matrix Lie groups, it is not clear to us if the symmetric space integration algorithm above has any advantages over the conventional Lie group integrators. However, there are many important cases where $L^+$ contains a symmetric sub algebra. In these cases the conventional Lie group integrator destroys this subspace structure, while it is respected by the symmetric integration algorithm. 

Given a Lie group $L$ with Lie algebra $\g$ and the corresponding symmetric space $L^+$.  An involutive automorphism on $L$ is $S\colon L\rightarrow L$ such that $S(xy) = S(x)S(y)$ and $S^2=\Id$. There are two important subsets of $L$
\begin{align*}
L^S &= \{x\in L \colon S(x) = x\}\\
L_S & = \{x\in L \colon S(x) = x^{-1}\}. 
\end{align*}
The invariant elements $L^S$ is a subgroup of $L$ and the alternating elements $L_S$ is a symmetric subspace of $L^+$. 
Furthermore, $L_S$ is isomorphic to the homogeneous space $L/L^S$ and \emph{every} connected symmetric space $\M$ is isomorphic to a symmetric space of this kind. So this is really more than just an example, it is a generic case covering all connected symmetric spaces. 

On the algebra level the derivative of $S$ at the identity yields an involutive Lie algebra automorphism $dS\colon \g\rightarrow \g$, and the
algebra splits as $\g = \h\oplus \m$ where $\h$ is the $+1$ eigenspace of $dS$ and $m$ the $-1$ eigenspace. Here $\h$ is a Lie algebra and $\m$ a Lie triple system. The split parts induces a $Z_2$ grading on $\g$, where the sub spaces satisfy $[\h,\h]\subset \h$, $[\h,\m]\subset \m$ and $[\m,\m]\subset\h$. 

There are many structured problems in matrix computations which fit into this format. A famous example is the group of all invertible matrices $L = \operatorname{Gl}(n)$ 
with the automorphism $S(A) = A^{-T}$. Here $L^S$ is the group of orthogonal matrices and $L_S$ the symmetric space of symmetric non-singular matrices.
An important sub algebra is the symmetric space of symmetric positive definite (SPD) matrices. On the algebra level, $dS(V) = -V^T$ 
splitting the space of matrices as ${\mathfrak gl}(n) = \h\oplus \m$ where $\h$ are the skew symmetric and $\m$ the symmetric matrices. Other examples
are discussed in~\cite{munthe2001generalized,munthe2014symmetric}. 

We return to the matrix square root in Algorithm~\ref{alg:matrix_case}. The final step is 
\[y_{\ell+1} = s_\ell\exp(\frac{\theta_\ell}{2})\exp(\frac{\theta_\ell}{2})s_\ell = S(A_{\ell+1})^{-1} A_{\ell+1}, \]
where $A_{\ell+1} = \exp(\frac{\theta_\ell}{2})s_\ell$. The \emph{generalised polar decomposition}~\cite{munthe2001generalized} is
$A_{\ell+1} = QP$, where $S(Q)=Q$ and $S(P)=P^{-1}$. 
For the SPD example, this is the classical polar decomposition where $Q$ is orthogonal and $P$ SPD. From this we find
\[y_{\ell+1} = S(A_{\ell+1})^{-1} A_{\ell+1} = PP,\] 
and hence $s_{\ell+1} = P$, the polar part of $A_{\ell+1}$. This polar part matrix square root is unique in many cases, such as for SPD matrices.

We finally note that in a dynamic situation, where $s_{\ell+1}$ is not too far from $s_\ell$, iterative techniques for the polar decomposition could be considered~\cite{higham1990fast}. There are many such practical issues we will not pursue here, but we will address these in forthcoming papers.

\subsection{Spheres}\label{sec:sphere}
Let $(S^n,\cdot,o)$ be the pointed symmetric space  $S^n = \{x\in \RR^{n+1} \colon x^Tx = 1\}$ with 
\[x\cdot y:= 2xx^Ty - y \]
The base point could be the north pole $o=(0,0,\ldots,0,1)$ (or any other point). 
The automorphisms are the orthogonal group $\Aut(S^n) = O(n+1)$ generated by the symmetries 
\begin{equation}\sigma_x = 2{xx^T}-I\in O(n+1).\end{equation}
The group of displacements $G(S^n)=\SO(n+1)$ is the group of pure rotations, and it is generated by the quadratic representation 
\[Q_x = \sigma_x\sigma_o = (2xx^T-I)(2oo^T-I),\]
where $Q_x$ is the rotation in the $o-x$ plane through twice the angle from $o$ to $x$. Let $\m$ be the horizontal vectors at the north pole
(the equatorial plane)
\[\m = T_oS^n=\{v\in \RR^{n+1} \colon v^To = 0\}\simeq \RR^n,\]
where we identify with $\RR^n$ by deleting last $0$ in the vector.
The infinitesimal generator is
\[\xi_V(p) = \ddto Q_{o+tv} p = \left(vo^T - ov^T\right) p = \hat{v} p,\]
where the hat map $\hat{v}:=  \left(vo^T - ov^T\right) \colon \m\rightarrow \so(n+1)$ is the unique map such that $\hat{v}o = v$. From this we
compute the triple bracket and the double adjoint on $\m$: 
\begin{align*}[u,v,w] &= [[\hat{u},\hat{v}],\hat{w}]o = \big(vw^T-v^Tw I\big)u \\
\ad^2_v w &= [w,v,v] = \big(vv^T-v^TvI\big)w .
\end{align*}
For $x=\ad^2v$ we find $x^2=-v^Tv x= (i\varphi)^2x$ for $\varphi = ||v||$. Since $\frac{\sqrt{x}}{\sinh(x)}=1+\sum_{j=1}^\infty a_jx^j$, we find
\[\dExpinv_v = \frac{\sqrt{x}}{\sinh(x)} =I+\left(\frac{i\varphi}{\sinh(i\varphi)}-1\right)\frac{x}{-\varphi^2} = I+\left(\frac{\varphi}{\sin(\varphi)}-1\right)\pi_v^\perp,\]
where $\pi_v^\perp=\frac{x}{-\varphi^2} = I-\frac{vv^T}{v^Tv}$ is projection on the orthogonal complement of $v$. This formula is not hard to understand geometrically. In $\dExpinv_v w$ the normal component of $w$ is increased by the factor $\frac{\varphi}{\sin \varphi}$. This is exactly the ratio of the circumference of a circle in the plane $\m$ with radius $\varphi$ and the circumference of the constant latitude circle on the sphere at angle $\varphi$ from
the north pole.

The exponential $\Exp(v)$ can be found by solving ${y'(t) = \xi_v(y(t))}$, $y(0)=o$ at $t=1$, giving
$\Exp(v) = \exp(\hat{v})o$. 
We can compute the matrix exponential $\exp(\hat{v})$ via Rodrigues formula~\cite{iserles2000lie}. From $\hat{v}^3 = -\varphi^2 \hat{v}$ for $\varphi=(v^Tv)^{\frac12}$
we find
\[\exp(\hat{v}) = I + \frac{\sin(\varphi)}{\varphi}\hat{v}+\frac12\frac{\sin^2(\varphi/2)}{(\varphi/2)^2}\hat{v}^2 .\]
This gives
\[\Exp(v) =\sin(\varphi)\frac{v}{||v||}+\cos(\varphi)o,\]
a rotation of $o$ in the $v$-$o$ plane through the angle $\varphi$.

We obtain the following spherical integrator, where we at each step choose a new base point $o = y_\ell$. 
The algorithm should never take steps $\theta_i\geq \pi$, because of the singularity in $\dExpinv$. If this happens, the step size $h$ must be reduced. 
\ \\
\begin{algorithm}[H]
\SetAlgoLined
\KwResult{Evolve $y'(t)=F(y(t))$ from $y(0)=y_0$ to $y_n \approx y(t\!=\!T)$,\\ \quad where $||y(t)||=1$ and $F(y)^Ty = 0$.}
{\bf Initialisation:} Choose time step $h=T/n$ and $\{a_{i,j}\}_{i,j=1}^r$, $\{b_j\}_{j=1}^r$ coefficients of RK.\\
\For{$\ell=0,\ldots,n-1$}{
\For{$i=1,\ldots,r$}{
  $\theta_i =\sum_{j=1}^r a_{i,j} \tilde{K}_j$ \\
  $\varphi = ||\theta_i||$\\
  \If {$\varphi==0$}{
       $K_i= hF(y_\ell)$ } 
  \Else{
  $E_i = \Exp(\theta_i) = \frac{\sin\varphi}{\varphi} \theta_i + \cos(\varphi) y_\ell$\\
  $s = \Exp(\theta_i/2) = (E_i+y_\ell)/||E_i+y_\ell||$ \\
  $v = h\ F\big(E_i\big)$ \\
  $K_i = \Gamma_{\theta_i}^{-1} v = v-2ss^Tv$ \\
  $\tilde{K}_i = \dExpinv_{\theta_i}K_i = K_i+\left(\frac{\varphi}{\sin(\varphi)}-1\right)\left(K_i-\frac{\theta_i\theta_i^T}{\varphi^2}K_i\right)$\\}
  }
  $\theta = \sum_{j=1}^r b_{j} \tilde{K}_j$\\
   $\varphi = ||\theta||$\\
  $y_{\ell+1} = \Exp(\theta) = \frac{\sin\varphi}{\varphi} \theta + \cos(\varphi) y_\ell$\\
  }
 \caption{CSI (Canonical Spherical Integrator)}
 \label{alg:CSI}
\end{algorithm}\ \\
The algorithm is not time symmetric for a time symmetric dynamical system, even if the underlying RK method is 'self adjoint' (time symmetric). 
The reason for this is the choice of
$o = y_\ell$ at each step, which is not symmetric with respect to time reversal. An alternative is to choose the base point as the geodesic midpoint $o = (y_\ell + y_{\ell+1})/||y_\ell + y_{\ell+1}||$. This yields a time symmetric integrator for time symmetric problems and self adjoint RK methods, see~\cite{zanna2001adjoint} for such methods in the Lie group case. But, of course, such methods are necessarily implicit. 
See remark on  relations to the Spherical Midpoint Method~\cite{mclachlan2017minimal} in Section~\ref{sec:remark}.

\subsection{Hyperbolic spaces} 
$H^n$ is the unique simply connected $n$-dimensional Riemannian manifold of constant  sectional curvature $-1$. We will present this via the ``hyperboloid model'',  an isometric embedding of $H^n$ in $n+1$ dimensional Minkowski space~\cite{gallier2020differential}. 

Let $\stset{\mik{z} = \mikp{z}\in \RR^{n+1}}{\miks{z}\in \RR^n, \mikt{z}\in \RR}$ be standard coordinates on Minkowski space, where $\mikp{z}$ denotes a column vector with $\mikt{z}$ in last position. 
Define the indefinite Minkowski inner product 
\begin{equation}\langle \y,\z\rangle = \mik{y}^T J \mik{z} =\mikt{y}\mikt{z} -\miks{y}^T\miks{z}\end{equation}
where $J = \diag(-1,\ldots,-1,+1)$. 
 Special relativity is defined on spacetime $\RR^4$, where $\miks{z}\in\RR^3$ is space and $\mikt{z}\in\RR$ is time. 

The subset at unit distance from the origin splits in two connected hyperboloids
\[H^\pm = \stset{\mik{z}=\mikp{z}\in \RR^{n+1}}{\langle\z,\z\rangle=1}=\stset{\mikp{z}}{\mikt{z}=\pm\sqrt{\miks{z}^T\miks{z}+1} }. \] 
We  identify $H^n := H^+$.  Let the \emph{Lorentz group} $O(n,1)$ be the matrix group preserving the Minkowski inner product:
\begin{align*}O(n,1) := &\stset{A\in \operatorname{GL}(n+1,\RR)}{\langle A\mik{y},A\mik{z}\rangle = \langle \mik{y},\mik{z}\rangle \mbox{ for all $\mik{y},\mik{z}\in \RR^{n+1}$}} \\= &\stset{A\in \operatorname{GL}(n+1,\RR)}{JAJ = A^{-T}}.\end{align*}

Polar decompositions in Lie groups are derived from involutive automorphisms on Lie groups \cite{munthe2014symmetric}. The classical matrix polar decomposition $A = US$, where $U^TU=I$ is orthogonal\footnote{We usually write orthogonal matrices `$Q$', but here we need to distinguish from the quadratic representation.} and $S$ is symmetric positive definite (SPD), is obtained from the involutive automorphism on $\operatorname{GL}(n+1,\RR)$ given as $\alpha(A) = A^{-T}$  by requiring $\alpha(U) = U$ and $\alpha(S) = S^{-1}$. An other involutive automorphism important for the Lorentz group is $\beta(A) = JAJ$. The two automorphisms commute, so  $\beta\alpha(A) = JA^{-T}J$ is also involutive. 
Note that $O(n,1)=\stset{A\in\operatorname{GL}(n+1,\RR)}{\beta\alpha(A)=A}$. 

\begin{proposition} The polar decomposition of $A\in O(n,1)$ is given as $A = US_{\miks{s}}$ where 
\begin{equation}U = \left(\begin{array}{cc}\widetilde{U} & 0 \\ 0 & u\end{array}\right)\end{equation}
for an orthogonal $\widetilde{U} \in O(n)$, $u=\pm 1$,  and 
\begin{equation}S_{\miks{s}} = \left(\begin{array}{cc}\widetilde{S} & \miks{s} \\ \miks{s}^T & \mikt{s}\end{array}\right)\end{equation}
for some $\miks{s}\in\RR^n$, $\widetilde{S}= \sqrt{I_n+\miks{s}\miks{s}^T}$ (SPD square root) and $\mikt{s}=\sqrt{1+\miks{s}^T\miks{s}}$, thus $(\miks{s};\mikt{s})\in H^n$. 
\end{proposition}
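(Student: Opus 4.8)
\emph{Proof strategy.} The plan is to reduce everything to the classical matrix polar decomposition and then exploit the two commuting involutive automorphisms of $\operatorname{GL}(n+1,\RR)$, namely $\alpha(A)=A^{-T}$ and $\beta(A)=JAJ$, whose common fixed-point set is exactly $O(n,1)$. First recall the standard fact that every $A\in\operatorname{GL}(n+1,\RR)$ has a \emph{unique} factorisation $A=US$ with $U$ orthogonal and $S$ SPD, given explicitly by $S=(A^{T}A)^{1/2}$ (the unique SPD square root) and $U=AS^{-1}$. So it suffices to prove that when $A\in O(n,1)$ these polar factors $U$ and $S$ themselves lie in $O(n,1)$, and then to read off their shape from that constraint.

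The key point is that $\beta\alpha$ maps orthogonal matrices to orthogonal matrices and SPD matrices to SPD matrices. For orthogonality: $\alpha(U)=U^{-T}$ is orthogonal whenever $U$ is, and $(JUJ)^{T}(JUJ)=JU^{T}UJ=J^{2}=I$ since $J^{T}=J$ and $J^{2}=I$. For positive definiteness: $\alpha(S)=S^{-T}=S^{-1}$ is SPD whenever $S$ is, and $JSJ=JSJ^{-1}$ is symmetric and similar to $S$, hence SPD. Applying $\beta\alpha$ to $A=US$ and using $\beta\alpha(A)=A$ therefore yields a second decomposition $A=(\beta\alpha(U))(\beta\alpha(S))$ into an orthogonal and an SPD factor; by uniqueness of the polar decomposition, $\beta\alpha(U)=U$ and $\beta\alpha(S)=S$, i.e. both $U$ and $S$ belong to $O(n,1)$.

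From here the block forms are elementary linear algebra. Since $U$ is orthogonal and fixed by $\beta\alpha$, we get $U=U^{-T}=JUJ$, so $U$ commutes with $J=\diag(-1,\dots,-1,1)$; as the eigenspaces of $J$ are $\RR^{n}\times\{0\}$ and $\{0\}\times\RR$, the matrix $U$ must be block diagonal, $U=\diag(\widetilde U,u)$, and orthogonality forces $\widetilde U\in O(n)$ and $u=\pm1$. For $S$, being symmetric and lying in $O(n,1)$ is equivalent to $JSJ=S^{-1}$, i.e. $SJS=J$. Writing $S$ in the symmetric block form with $n\times n$ block $\widetilde S$, off-diagonal block $\miks{s}$ and corner entry $\mikt{s}$, the identity $SJS=J$ expands into the three relations $\widetilde S^{2}=I_{n}+\miks{s}\,\miks{s}^{T}$, $\mikt{s}^{2}=1+\miks{s}^{T}\miks{s}$ and $(\mikt{s}\,I_{n}-\widetilde S)\miks{s}=0$. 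Because every principal submatrix and every diagonal entry of an SPD matrix is positive, this forces $\widetilde S=\sqrt{I_{n}+\miks{s}\,\miks{s}^{T}}$ (the SPD square root) and $\mikt{s}=+\sqrt{1+\miks{s}^{T}\miks{s}}$; the relation $\mikt{s}^{2}-\miks{s}^{T}\miks{s}=1$ together with $\mikt{s}>0$ is precisely the statement that $\mikp{s}\in H^{n}$. Uniqueness of the whole decomposition $A=US_{\miks{s}}$ is inherited from the classical case.

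The only step requiring a small idea is checking that the third relation $(\mikt{s}\,I_{n}-\widetilde S)\miks{s}=0$ imposes no extra constraint on $\miks{s}$: this is automatic because $\miks{s}$ is an eigenvector of $I_{n}+\miks{s}\,\miks{s}^{T}$ with eigenvalue $1+\miks{s}^{T}\miks{s}$, hence an eigenvector of its SPD square root $\widetilde S$ with eigenvalue $\mikt{s}$. Everything else is bookkeeping with $2\times 2$ block multiplication and the standard uniqueness of the polar and SPD-square-root factorisations, so I do not anticipate any genuine obstacle; the slightly delicate part is simply being careful that the automorphism argument legitimately transfers across the \emph{uniqueness} of the classical decomposition.
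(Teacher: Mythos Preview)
Your argument is correct and is essentially the same as the paper's: both use the commuting involutions $\alpha,\beta$ together with the uniqueness of the classical polar decomposition to force $\beta(U)=U$ and $\beta(S)=S^{-1}$, and then read off the block forms. The paper explicitly says it is ``sketching a structural argument, omitting some minor details''; you have supplied exactly those details (that $\beta\alpha$ preserves the orthogonal and SPD classes, and that the relation $(\mikt{s}I-\widetilde S)\miks{s}=0$ is automatically satisfied), and your use of $SJS=J$ in place of the paper's $SS^{-1}=I$ is a trivially equivalent reformulation.
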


\begin{proof}An alternative and detailed proof is given in~\cite{gallier2020differential}. Here we sketch a structural argument, omitting some minor details. 
We seek $A = US$ where $\alpha(U) = U$ and $\alpha(S) = S^{-1}$. We have
\[US = A = \beta\alpha(A) = \beta\alpha(U)\beta\alpha(S)= \beta(U)\beta(S^{-1}),\]
which yields $\beta(U)=U$ and $\beta(S)=S^{-1}$. From $\beta(U)=U$ follows
\[U = \left(\begin{array}{cc}\widetilde{U} & 0 \\ 0 & u\end{array}\right),\]
and $\alpha(U)=U$ implies $\widetilde{U}$ is orthogonal and $u^2=1$. Writing 
\[S = \left(\begin{array}{cc} \widetilde{S} & \miks{s} \\ \miks{s}^T & \mikt{s}\end{array}\right)\] we find from $\beta(S)=S^{-1}$ that 
\[S^{-1} = \left(\begin{array}{cc} \widetilde{S} & -\miks{s} \\ -\miks{s}^T & \mikt{s}\end{array}\right) .\] Multiplying $SS^{-1} = I_{n+1}$ yields $\widetilde{S}^2-\miks{s}\miks{s}^T = I_n$, $\mikt{s}^2-\miks{s}^T\miks{s} = 1$ and $\widetilde{S}\miks{s} = \mikt{s}\miks{s}$. 
\end{proof}
Let $O_0(n,1)$ denote the matrices in $O(n,1)$ where $u=+1$.  This is the subgroup mapping $H^+\mapsto H^+$ and $H^-\mapsto H^-$. 
The matrices where $u=-1$ swap the two components of $H^\pm$, but they do not form a subgroup. Note that 
\[\det(S^2) = \det(SJS^{-1}J) = \det(SS^{-1}) = \det(I) = 1 ,\]
and since $S$ is SPD, we have $\det(S)=1$. 
Hence $\det(A) = \det(U) =u\det(\widetilde{U}) = \pm 1$. 
The special Lorentz group \[\operatorname{SO}(n,1):= \stset{A\in O(n,1)}{\det(A)=1}\]
has two connected components for $u = \pm 1$. 
We denote $\operatorname{SO}_0(n,1)$ the connected component containing the identity. 
$\operatorname{SO}(n,1)$ consists of those $A$ where $u\det(\widetilde{U})=1$ and $\operatorname{SO}_0(n,1)$ those where $u=\det(\widetilde{U})=1$. 

We will present $H^n$ as a homogeneous space. Let $o= ({\miks{0}};1)\in H^n$ be the base point. Since $So = (\miks{s};\mikt{s})$, it follows that both $O_0(n,1)$ and $\operatorname{SO}_0(n,1)$ act transitively on $H^n$. The stabiliser subgroup at $o$ consists of $A=US$ such that $Ao = o$. Multiplied out this implies
$S=I_{n+1}$ and 
\[U = \left(\begin{array}{cc}\widetilde{U} & \miks{0} \\ \miks{0} & 1\end{array}\right), \quad  \widetilde{U}\in O(n) .\]
Thus \[H^n = O_0(n,1)/O(n) \simeq \operatorname{SO}_0(n,1)/\operatorname{SO}(n) .\]

We seek the symmetric product on $H^n$. Note that $\sigma_{o} := J\in O_0(n,1)$ is the isometric reflection around $o$. Reflection around a general point $\mik{s}=\mikp{s}\in H^n$ is obtained by 
moving the point to $o$, reflect with $J$ and move back, 
\[\sigma_{\mik{s}} = S_{\miks{s}} \sigma_o S_{\miks{s}}^{-1} = \left(\begin{array}{cc} C & \miks{s} \\ \miks{s}^T & c\end{array}\right)\left(\begin{array}{cc} -I & {\bf 0} \\ {\bf 0}^T & 1\end{array}\right)
\left(\begin{array}{cc} C & -\miks{s} \\ -\miks{s}^T & c\end{array}\right) =  \left(\begin{array}{cc} -I-2\miks{s}\miks{s}^T & 2\mikt{s}\miks{s} \\ -2\mikt{s}\miks{s}^T & 1+2\miks{s}^T\miks{s}\end{array}\right) 
 .
\]
From this we find the more `obvious' expression for the isometric reflection
\begin{equation}\label{eq:ssymm}
\sigma_{\mik{s}} = 2\mik{s}\langle\mik{s},\_\rangle - I .
\end{equation}
The quadratic representation is
\begin{equation}\label{eq:Hquad}Q_{\mik{s}} = \sigma_{\mik{s}}\sigma_{o} = S_{\miks{s}}JS_{\miks{s}}^{-1}J = S_{\miks{s}}^2 = S_{2\mikt{s}\miks{s}}= \left(\begin{array}{cc} I_n+2\miks{s}\miks{s}^T & 2\mikt{s}\miks{s} \\ 2\mikt{s}\miks{s}^T & 1+2\miks{s}^T\miks{s}\end{array}\right) .\end{equation}

The computation of geodesic exponential $\Exp$, triple bracket and $\dExp^{-1}$ is very similar to the spherical case, Section~\ref{sec:sphere}. 
We compute the geodesic exponential from~(\ref{eq:Exp1})-(\ref{eq:Exp2}). From the embedding  $H^n\subset\RR^{n+1}$ we see that $\m=T_{o}H^n = \RR^n \simeq\{\mik{v}=(v;0)\in\RR^{n+1}\}$. 
Let $\gamma(t) = o + \frac12 t\mik{v} + \mathcal{O}(t^2)$. At $\mik{y}\in H^n$ we have
\begin{equation}\label{eq:xi}\xi_{v}(\mik{y}) = \left.\frac{\partial}{\partial t}\right|_{t=0}Q_{\gamma{t}} \mik{y}= \left.\frac{\partial}{\partial t}\right|_{t=0}\left(\begin{array}{cc} I_n & tv \\ tv^T & 1\end{array}\right) \mik{y} =  \left(\begin{array}{cc} {\bf 0} & v \\ v^T & 0\end{array}\right) \mik{y}  = \hat{\mik{v}}\mik{y} ,\end{equation}
where the hat-map in this case is
$\hat{\mik{v}} = \mik{v}\langle o,\_\rangle-o\langle \mik{v},\_\rangle$. 

\emph{Remark}: The structure of this hat map can be explained from the involutive automorphisms $\alpha,\beta\colon GL(n+1,\RR)\rightarrow GL(n+1,\RR)$. Differentiating these we get 
involutive automorphisms $d\alpha,d\beta\colon \gl(n+1,\RR)\rightarrow \gl(n+1,\RR)$
\[d\alpha(A) = -A^T, \qquad d\beta(A) = JAJ.\]
Since $\alpha(S) = S^{-1}$ and $\beta(S)=S^{-1}$, we have $S = \exp(V)$ for $d\alpha(V)=-V$ and $d\beta(V) = -V$.  
This implies that $V$ is of the form $V = \hat{\mik{v}}$. Let $d\alpha^-$ and $d\beta^-$ denote the -1 eigenspaces of these involutive automorphisms on $\gl(n+1,\RR)$, then $\hat{\m} :=\gl(n+1,\RR)\cap d\alpha^-\cap d\beta^-$ is a Lie triple system (LTS) of symmetric matrices and $\hat{\ }\colon \m \rightarrow \hat{\m}$ is the unique LTS isomorphism such that $\mik{v} = \hat{\mik{v}} o$. 
A matrix of the form of $S$ is called a \emph{Lorentz boost} in special relativity, and is the exponential of a matrix in $\hat{\m}$. 

From the hat map isomorphism we
compute the triple bracket and the double adjoint on $\m$: 
\begin{align*}[u,v,w] &= [[\hat{u},\hat{v}],\hat{w}]o = \big(v^Tw I-vw^T\big)u \\
\ad^2_v w &= [w,v,v] = \big(v^TvI-vv^T\big)w .
\end{align*}
Note that this is the negative of the triple bracket in the spherical case, which is not strange since the triple bracket is the Riemannian curvature tensor. 

For $x=\ad^2v$ we find $x^2= \varphi^2x$ for $\varphi = \sqrt{-\langle\mik{v},\mik{v}\rangle}$. As in the spherical case, we find
\begin{equation}\label{eq:Hdexpinv}\dExpinv_{\mik{v}} = \frac{\sqrt{x}}{\sinh(x)} =I+\left(\frac{\varphi}{\sinh(\varphi)}-1\right)\frac{x}{\varphi^2} = I+\left(\frac{\varphi}{\sinh(\varphi)}-1\right)\pi_{\mik{v}}^\perp,\end{equation}
where 
\begin{equation}\label{eq:Hproj}\pi_\mik{v}^\perp= I -\frac{\langle\mik{v},\_\rangle}{\langle \mik{v},\mik{v}\rangle}\end{equation} 
is the Minkowski projection on the orthogonal complement of $\mik{v}$. 

From~\refp{eq:xi} we find the  the geodesic exponential $\Exp\colon \m\rightarrow H^n$
\begin{equation}\Exp(\mik{v}) = \exp\left(\hat{\mik{v}}\right) o  .\end{equation}
Similar to the spherical case, $\hat{\mik{v}}^3 = \varphi^2 \hat{\mik{v}}$ yields a Rodrigues type formula 
\begin{equation}\exp(\hat{\mik{v}}) = I + \frac{\sinh(\varphi)}{\varphi}\hat{\mik{v}}+\frac12\frac{\sinh^2(\varphi/2)}{(\varphi/2)^2}\hat{\mik{v}}^2 . \end{equation}
This gives the geodesic exponential 
\begin{equation}\label{eq:Hexp}\Exp(\mik{v}) =\frac{\sinh(\varphi)}{\varphi}\mik{v}+\cosh(\varphi)o ,
\end{equation}
for $\langle \mik{v},o\rangle = 0$. This is a hyperbolic rotation of the $\mik{v}$-$o$ plane through the hyperbolic angle $\varphi$.

Similar to CSI Algorithm~\ref{alg:CSI} we will express our hyperbolic integrator using at each step  the base point $o=\mik{y}_n$, where each step is performed as in Algorithm~\ref{alg:stepfromo}. 
At an arbitrary point $o=\mik{y}_n\in H^n$
the tangent space is
\[\m = T_oH^n = \stset{\mik{w}\in\RR^{n+1}}{\langle \mik{w},o\rangle =  0} .\]
The geodesic exponential is still given by~\refp{eq:Hexp}, with inverse differential~\refp{eq:Hdexpinv}-\refp{eq:Hproj}. Finally, for $\theta\in \m$, we need $\Gamma_{\theta}^{-1}$, parallel transport from $T_{\Exp(\theta)}H^n$ to $\m$ along the joining geodesic. Let $\mik{s} = \Exp(\theta/2)$ be the midpoint on the geodesic. From Definition~\ref{def:parallel} we find
\[\Gamma_{\theta}^{-1} = TQ_{\mik{s}}^{-1} = T\left(\sigma_{\mik{s}}\sigma_o\right)^{-1}= T\left(\sigma_{o}\sigma_{\mik{s}}\right) = -T\sigma_{\mik{s}}\]
since $T\sigma_o = -I$.  From~\refp{eq:ssymm} this yields
\begin{equation}
\Gamma_{\theta}^{-1} = I - 2\mik{s} \langle\mik{s},\_\rangle.\end{equation}

\ \\
\begin{algorithm}[H]
\SetAlgoLined
\KwResult{Evolve $\mik{y}'(t)=F(\mik{y}(t))$,  $\mik{y}(t)\in H^n$,  from $\mik{y}(0)=\mik{y}_0$ to $\mik{y}_n \approx \mik{y}(t\!=\!T)$,\\ \quad where $\langle \mik{y}(t),\mik{y}(t)\rangle =1$ and $\langle F(\mik{y}),\mik{y}\rangle = 0$.}
{\bf Initialisation:} Choose time step $h=T/n$ and $\{a_{i,j}\}_{i,j=1}^r$, $\{b_j\}_{j=1}^r$ coefficients of RK.\\
\For{$\ell=0,\ldots,n-1$}{
\For{$i=1,\ldots,r$}{
  $\theta_i =\sum_{j=1}^r a_{i,j} \tilde{K}_j \in \m = T_{\mik{y}_{\ell}}H^n$ \\
  $\varphi = \sqrt{-\langle\theta_i,\theta_i\rangle}$\\
  \If {$\varphi==0$}{
       $K_i= hF(\mik{y}_\ell)$ } 
  \Else{
  $\mik{u} = \Exp(\theta_i) = \frac{\sinh(\varphi)}{\varphi} \theta_i + \cosh(\varphi) \mik{y}_\ell$\\
    $\mik{s} = \Exp(\theta_i/2) = \frac{\sinh(\varphi/2)}{\varphi} \theta_i + \cosh(\varphi/2) \mik{y}_\ell$\\
 $\mik{v} = h\ F\big(\mik{u}\big)$ \\
  $K_i = \Gamma_{\theta_i}^{-1} \mik{v} = \mik{v}-2\mik{s}\langle\mik{s},\mik{v}\rangle$ \\
  $\tilde{K}_i = \dExpinv_{\theta_i}K_i = K_i+\left(\frac{\varphi}{\sinh(\varphi)}-1\right)\left(K_i-\frac{\langle\theta_i,K_i\rangle}{\varphi^2}\theta_i\right)$\\}
  }
  $\theta = \sum_{j=1}^r b_{j} \tilde{K}_j$\\
   $\varphi = \sqrt{-\langle\theta,\theta\rangle}$\\
  $\mik{y}_{\ell+1} = \Exp(\theta) = \frac{\sinh\varphi}{\varphi} \theta + \cosh(\varphi) \mik{y}_{\ell}$\\
  }
 \caption{CHI (Canonical Hyperbolic Integrator)}
 \label{alg:hyperbolic_integration}
\end{algorithm}\ \\
\subsubsection{Other models of $H^n$} There are other well known models of $H^n$, most famous are the Poincar\'{e} \emph{half space} and the \emph{disc models}. These are obtained from the  hyperboloid model by stereographic projections. The half space model is the stereographic projection from infinity along the edge of the light cone in the $x_n$ - $x_0$ plane, where the hyperbolic space is realised as the upper half space of $\RR^n$ given by $x_n>0$, or upper half plane of  $\CC$ in the case $n=2$. The disc model is the stereographic projection of $H^n$ from the point $(\miks{0};-1)$, realising the hyperbolic space as the unit ball in $\RR^n$, or unit disc $\CC$ in the case $n=2$. These models are beautiful, in particular for $n=2$ where the hyperbolic space becomes a homogeneous space under the action of the projective linear fractional transformations $\PSL(2,\RR)$ on $\CC$. However, these models are mathematically equivalent to the model we have presented above, so we omit the details. Spaces of constant negative curvature are important in the geometric theory of dynamical systems. E.g.\ Anosov flows on compact surfaces  $H^2/\Gamma$, where $\Gamma$ is a discrete subgroup of $\PSL(2,\RR)$  (Fuchsian group) are generic examples of  Axiom A type dynamical systems with chaotic solutions. Variants of the CHI algorithm should be interesting for numerical studies of Anosov flows. 
In numerical analysis, Riemannian spaces of negative curvature have recently appeared in geometric generalisations of classical stability theory to Riemannian manifolds~\cite{arnold2023b}. The present algorithms should be of interest in the development of these ideas. 

\section{Final remarks}\label{sec:remark}
We have in this paper developed the general concept of canonical geometric integration on symmetric spaces. The important  issue of numerical qualities of these algorithms and their applications to practical computational problems are left to sequel papers. Here we briefly remark on  interesting questions to be pursued, and  possible applications. 

Differential equations evolving on spheres are ubiquitous, with important examples from robotics, computational mechanics, rigid body dynamics and flows on planetary surfaces. Infinite dimensional spheres are natural domains for partial differential equations preserving energy or $L_2$ norms, such as the Schr\"{o}dinger and  Korteweg--de Vries equations. It is interesting that the CSI algorithm on the $n$-sphere is not significantly more expensive than classical RK methods on the embedding of the sphere in $\RR^{n+1}$. 

Hyperbolic geometry is the foundation of special relativity. Our CHI algorithm is formulated in terms of Lorentz transformations and should be interesting for geometric integration of Maxwell equations and  dynamical systems in special relativity, as it  preserves the Lorentz invariance of the equations. 

An other interesting class of equations evolving on symmetric spaces are Lie-Poisson systems. Hamiltonian mechanics are naturally formulated on cotangent bundles of Lie groups. By symmetry reduction, these can often be reduced to Lie-Poisson systems evolving on the dual of Lie algebras. The natural coadjoint action foliates the dual Lie algebras in symplectic leafs, which in many cases are symmetric spaces. For example spheres are symplectic leaves for $\so(n)^*$ and hyperbolic spaces for $\sl(n)^*$. 
Questions of symplectic algorithms for such problems are interesting, hard and largely open. We remark that the CSI integrator based on the implicit midpoint RK and the geodesic midpoint as base point for each step is \emph{not} the same as the celebrated symplectic spherical midpoint method~\cite{mclachlan2017minimal}. However the difference between these two methods is closely related to the difference between using the exponential and the Cayley transform as coordinate mappings. For use of Cayley maps,  we refer to the pioneering work on symplectic integration on spheres by Lewis and Simo~\cite{lewis1994conserving}.
Symplectic integration on hyperbolic symplectic leaves is to our knowledge largely unchartered territory. 

Differential equations evolving on the  space of symmetric positive definite matrices occur in different contexts. One example is in the inverse eigenvalue problem for SPD Toeplitz matrices, formulated as isospectral flows~\cite{calvo1997numerical}. A different example is the tracing of nerve fibres in diffusion tensor imaging of the brain, where the voxels are symmetric positive definite diffusion tensors. 

We have in this paper not written out the example of Grassman manifolds, which is yet another example of symmetric spaces. Differential equations evolving on Grassman manifolds has been considered by several authors, see~\cite{beck2018grassmannian} and references within. 

\bibliographystyle{plain} \bibliography{CSI}

\end{document}